\theoremstyle{plain}
  \newtheorem{theorem}{Theorem}[section]
  \newtheorem{proposition}[theorem]{Proposition}
  \newtheorem{lemma}[theorem]{Lemma}
\theoremstyle{definition}
\newtheorem{conjecture}[theorem]{Conjecture}
\newtheorem{observation}[theorem]{Observation}
\newcommand{\Bier}{{\mathrm{Bier}}}
\begin{document}

\title{Nonpolytopal nonsimplicial lattice spheres with nonnegative toric $g$-vector}

\author{Louis J.\ Billera}
\address{Department of mathematics\\
      Cornell University\\
      Ithaca, NY 14853-4201, USA}
\email{billera@math.cornell.edu}

\author{Eran Nevo}
\address{Department of Mathematics\\
    Ben-Gurion University of the Negev\\
    Be'er-Sheva 84105, Israel}
\email{nevoe@math.bgu.ac.il}
\thanks{Research of the first author was partially supported by NSF grant DMS-0555268; that of the second author was partially supported by NSF grant DMS-0757828 and by Marie Curie grant IRG-270923.}

\keywords{toric $g$-vector, polytopes, Bier poset, multiplex}

\maketitle

\begin{abstract}
We construct many nonpolytopal nonsimplicial Gorenstein$^{*}$ meet
semi-lattices with nonnegative toric
$g$-vector, supporting a conjecture of Stanley.  These are formed as Bier spheres
over the face posets of multiplexes, polytopes constructed by Bisztriczky
as generalizations of simplices.
\end{abstract}

\section{Introduction}
A poset $P$ with a minimum $\hat{0}$ is called  \emph{Gorenstein$^{*}$} if the order complex of $P-\hat{0}$ is Gorenstein$^{*}$ (that is, if we add a maximum to $P$, then $\hat{P}=P\cup \{\hat{1}\}$ is both Eulerian and Cohen-Macaulay). For $P$ a Gorenstein$^{*}$ poset, denote its toric $g$-vector by $g(P)$. Our starting point is the following conjecture of Stanley.

\begin{conjecture}(\cite{Stanley:GeneralizedH-vectors}, Conjecture 4.2.(c,d))\label{conj:StanleyToric-g}
Let $P$ be a Gorenstein$^{*}$ meet semi-lattice. Then $g(P)$ is an $M$-sequence. In particular, $g(P)$ is nonnegative.
\end{conjecture}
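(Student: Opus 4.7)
The plan is to attempt an extension of Karu's proof of the nonnegativity of the toric $g$-vector for general (not necessarily simplicial) polytopes to the broader class of Gorenstein$^{*}$ meet semi-lattices. In the polytopal setting, Karu constructs for each polytope a combinatorial intersection cohomology module equipped with a Lefschetz decomposition whose Hilbert function recovers the toric $h$-vector of the boundary; nonnegativity of $g$ then follows from hard Lefschetz, and the $M$-sequence property follows from a graded ring structure on a suitable primitive quotient.

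First I would try to associate to every Gorenstein$^{*}$ meet semi-lattice $P$ of rank $n$ a graded module $IH(P)$ over a polynomial ring $A = \mathbb{R}[x_F : F \in P - \hat{0}]/J$, where the ideal $J$ encodes the meet operation — this is the analog of the Stanley–Reisner construction for a simplicial complex, and it is precisely the hypothesis that meets exist that makes it well defined. I would then construct $IH(P)$ by induction on rank, prescribing its restriction to each proper lower interval $[\hat{0},F]$ to agree with a cone over $IH(\partial F)$ (where $\partial F$ denotes the induced subposet below $F$), and gluing these local pieces via a sheaf-theoretic construction on the order complex of $P - \hat{0}$.

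Next, the key algebraic step is to equip $IH(P)$ with a symmetric nondegenerate Poincar\'e pairing, which should be forced by the Eulerian condition through a Dehn–Sommerville-type identity, and then to establish a hard Lefschetz theorem: multiplication by a suitably generic degree-one element $\ell \in A_{1}$ yields isomorphisms $IH(P)_{i} \to IH(P)_{n-i}$ for $i \le n/2$. Granting this, the primitive part $\ker(\ell^{n-2i+1}) \cap IH(P)_{i}$ has dimension $g_{i}(P)$, giving nonnegativity immediately, and the $M$-sequence property would follow by exhibiting a commutative graded ring structure on $\bigoplus_{i} \ker(\ell^{n-2i+1}) \cap IH(P)_{i}$ generated in degree one, so that Macaulay's theorem applies.

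The main obstacle, and the reason this conjecture is currently open, lies in hard Lefschetz outside the polytopal regime. Karu's argument uses an inductive flip construction based on Bruggesser–Mani shellings of rational polytopes, and it leans essentially on the existence of a (rational) convex realization to produce the Lefschetz element and to transfer Hodge–Riemann relations across flips; no combinatorial substitute is presently known for abstract Gorenstein$^{*}$ meet semi-lattices, which need not admit any convex or even PL realization. A realistic first subgoal would therefore be to isolate a combinatorial convexity surrogate — for instance a CL-labeling compatible with the meet operation, together with a recursive atom ordering playing the role of a line shelling — and to verify that Karu's flip argument can be mimicked in that setting; whether such a surrogate exists in sufficient generality to cover all Gorenstein$^{*}$ meet semi-lattices is exactly the heart of the conjecture, and it is precisely because this step is out of reach that the present paper proceeds instead by producing nontrivial nonpolytopal examples where $g \ge 0$ can be verified directly.
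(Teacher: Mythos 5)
The statement you were asked to prove is a conjecture of Stanley, and it remains open; the paper does not prove it. The paper's actual contribution is Theorem \ref{thm:main}, which constructs a large family of nonpolytopal, nonsimplicial Gorenstein$^{*}$ meet semi-lattices---Bier posets over boundary posets of multiplexes---and verifies directly, via Lemma \ref{lem:g(M)-bistellar} and the product formula of Observation \ref{obs:Kalai}, that their toric $g$-vectors are nonnegative. Your write-up is candid about this: you conclude by observing that hard Lefschetz has no known combinatorial substitute beyond polytopes and that the paper proceeds instead by producing nontrivial examples. That diagnosis is correct, so what you have is not a flawed proof so much as an accurate account of why no proof exists; there is no proof in the paper to compare it against.

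Two points are worth sharpening. First, even if hard Lefschetz were established in this generality, the $M$-sequence part of Conjecture \ref{conj:StanleyToric-g} would not follow automatically: one would need the primitive part $\bigoplus_i \ker(\ell^{n-2i+1})\cap IH(P)_i$ to carry a commutative graded ring structure generated in degree one, and this is not established even for arbitrary (nonsimplicial, nonrational) polytopes---Karu's theorem yields nonnegativity only, which is why the paper cites \cite{Karu:HLforPolytopes} for nonnegativity rather than for the full $M$-sequence claim. Second, the ring $A=\mathbb{R}[x_F:F\in P-\hat 0]/J$ you propose as an analog of the Stanley--Reisner ring is not the right carrier in the nonsimplicial setting; the combinatorial intersection cohomology of Barthel--Brasselet--Fieseler--Kaup and of Bressler--Lunts is built as a sheaf on the fan (or on the poset regarded as a fan-like object), not as a quotient of a polynomial ring in face variables, and the meet-semi-lattice hypothesis is what makes the sheaf gluing coherent rather than what cuts out an ideal $J$. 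These are refinements rather than fatal objections; the central obstruction you identified---the lack of a convexity surrogate to run the flip/shelling induction and to transport the Hodge--Riemann relations---is exactly the heart of the matter, and exactly why the paper settles for a family of examples rather than a proof.
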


Conjecture \ref{conj:StanleyToric-g} is known to hold in the following cases: face posets of the boundary complexes of simplicial polytopes \cite{Stanley:NumberFacesSimplicialPolytope-80}, strongly-edge-decomposible spheres \cite{Babson-Nevo, Murai-EdgeDecomposible} (these include Kalai's squeezed spheres \cite{Kalai-manyspheres, Murai-EdgeDecomposible}), Bier spheres \cite{BierPosets-BPSZ} - all these cases are simplicial spheres, and the nonnegativity of $g(P)$ is known for the boundary poset of arbitrary polytopes \cite{Karu:HLforPolytopes}.

In this work we construct many nonpolytopal nonsimplicial Gorenstein$^{*}$ meet semi-lattices with nonnegative toric
$g$-vector, supporting Conjecture \ref{conj:StanleyToric-g}. Specifically,
\begin{theorem}\label{thm:main}
Let $n\geq d\geq 4$ be integers and $M$ be the boundary poset of the $d$-dimensional multiplex with $n+1$ vertices. Let $I$ be an ideal in $M$ which contains all the elements in $M$ of rank at most $\lceil\frac{d}{2}\rceil+1$. Let $P$ be the Bier poset $P=\Bier(M,I)$. Then $P$ is a Gorenstein$^{*}$ meet semi-lattice and $g(P)$ is nonnegative.
\end{theorem}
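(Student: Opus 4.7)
The plan is to separate the two assertions of Theorem~\ref{thm:main}: the structural claim that $P$ is a Gorenstein* meet semi-lattice, and the numerical claim that $g(P)\ge 0$. The first is essentially formal, while the second will carry the combinatorial content of the argument.

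For the structural claim, I would begin by noting that because the $d$-dimensional multiplex is a convex polytope, its boundary poset $M$ is Eulerian and Cohen--Macaulay, hence Gorenstein*, and it is a meet semi-lattice. I would then invoke (or, if no convenient reference is at hand, establish directly) a general preservation principle for the Bier construction: if $M$ is a Gorenstein* meet semi-lattice and $I\subseteq M$ is any order ideal, then $\Bier(M,I)$ is again a Gorenstein* meet semi-lattice. This reduces to a check on the intervals of the augmented poset $\hat P$, together with a comparison of meets in $P$ with meets in $M$ and in the Boolean factor coming from the ``complement'' coordinate of the Bier construction.

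For the nonnegativity of $g(P)$, the main task is to obtain an identity relating $g(P)$ to data that can be controlled. I would aim for a decomposition of the form $g(P)=g(M)+\Delta(I)$, in which $\Delta(I)$ is a sum of toric $g$-polynomials over intervals of $\hat M$ that are cut out by the ideal $I$. Such a formula should arise from the recursive definition of the toric $h$- and $g$-polynomials, together with the observation that every proper interval in $\hat P$ is either isomorphic to an interval of $\hat M$ or to a product of such an interval with a Boolean interval---both of which admit explicit generalized $h$-polynomials. Once such an identity is in place, Karu's theorem \cite{Karu:HLforPolytopes} supplies $g(M)\ge 0$, since $M$ is polytopal, and the hypothesis that $I$ contains every element of rank at most $\lceil\tfrac{d}{2}\rceil+1$ is precisely what is needed to guarantee that each coordinate of $\Delta(I)$ is separately nonnegative: the rank bound forces the nontrivial contributions to $\Delta(I)$ to occur in degrees where they are controlled by small-rank intervals, which are themselves faces of the polytope $M$ and hence have nonnegative toric $g$.

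The main obstacle I anticipate is the second step---producing the formula for $g(P)$. The toric $g$-polynomial is defined by a recursion involving a truncation operator, so propagating it cleanly through the Bier construction is delicate; the ``mixed'' intervals of $\hat P$ which are not intervals of $\hat M$ must still be assembled from Gorenstein* pieces of $M$ with rank shifts dictated by $I$, and the rank hypothesis on $I$ must be used in exactly the right place to keep every term in the resulting expression signed correctly. Once the formula is established and its terms identified with toric $g$-vectors of intervals of $M$ (each nonnegative by Karu), the conclusion $g(P)\ge 0$ follows.
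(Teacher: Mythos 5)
Your high-level plan---separate the structural claim from the numerical one, and reduce nonnegativity of $g(P)$ to a decomposition with nonnegative pieces---is in the right spirit, and the structural half is handled essentially as the paper does (Gorenstein$^*$ preservation is Theorem \ref{thm:BPSZ}, and the meet is computed explicitly as $[a,b]\wedge_P[c,d]=[a\wedge_M c,\ b\vee_M d]$). However, the numerical half as you have sketched it has a genuine gap, and it is precisely the part where the paper has to work.

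You propose a decomposition $g(P)=g(M)+\Delta(I)$ with $\Delta(I)$ a sum of toric $g$-polynomials of intervals of $\hat M$, each nonnegative by Karu. Two problems. First, you do not actually produce such a decomposition, and the toric $g$-polynomial does not propagate through Bier in the way you hope: intervals $[[\hat 0,\hat 1],[x,y])$ in $\Bier(M,I)$ are not intervals of $\hat M$, nor products of such with Boolean intervals; they are (derived from) products $[\hat 0,x]*[y,\hat 1]^*$, i.e.\ products of a lower interval with the \emph{dual} of an upper interval. Second, even granting a decomposition, appealing to Karu does not by itself produce nonnegativity of the pieces $\Delta(I)$, because the recursion for $f$ introduces signs via $(x-1)^{r-r(t)}$, and nonnegativity of the inputs does not force nonnegativity of the contributions. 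Nothing in your sketch explains where the rank hypothesis on $I$ actually enters, beyond the assertion that it is ``precisely what is needed.''

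What the paper does instead is to compare $\Bier(M,Q)$ with $\Bier(M,Q-\{t\})$ for a maximal element $t$ of the ideal $Q$ (Lemma \ref{lem:bistellar}), obtaining a clean closed form
\[
h(\Bier(P,Q-\{t\}),x)-h(\Bier(P,Q),x)= h([\hat 0,t),x)\,g((t,\hat 1]^*,x) - g([\hat 0,t),x)\,h((t,\hat 1]^*,x),
\]
which is \emph{independent of the ideal}. Then it specializes to multiplexes (Lemma \ref{lem:g(M)-bistellar}), where by Theorem \ref{thm:multiplex} every $[\hat 0,t)$ and $(t,\hat 1]^*$ is again the boundary poset of a multiplex, with completely explicit $h$ and $g$ (namely $g=1+g_1x$ and $h=1+h_1x+\cdots+h_1x^{r-1}+x^r$). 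A direct coefficient computation then shows $\Delta g\ge 0$ \emph{provided} $r(t)>\lceil d/2\rceil+1$ or $r(t)=d$---this is where the rank hypothesis on $I$ is used, since only elements of such ranks are removed in passing from the trivial ideal $M$ (where $\Bier(M,M)\cong M$ and $g(M)=1+(n-d)x\ge 0$) down to $I$. The telescoping sum of these nonnegative $\Delta g$'s finishes the proof. Note that Karu's theorem is not needed anywhere: the multiplex $g$-vectors are explicit and linear. Your proposal, by trying to avoid the multiplex structure, loses the mechanism that makes the sign analysis work; indeed for general polytopal $M$ the analogous $\Delta g$ is not nonnegative, and so the theorem as stated is genuinely a statement about multiplexes, not a formal consequence of Karu.
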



In Section \ref{sec:prelim} we give the needed background on $g$-vectors, Bier posets and multiplexes; in Section \ref{sec:cumpute-g} we prove Theorem \ref{thm:main} by analyzing the effect on the toric $g$-vector of a local move on Gorenstien$^*$ posets; in Section \ref{sec:last} we remark on nonpolytopality and nonsimpliciality of $\Bier(M,I)$.

\section{Preliminaries}\label{sec:prelim}
\textbf{The $g$-vector.}
We follow Stanley \cite{Stanley:GeneralizedH-vectors}.
Let $P$ be a Gorenstein$^{*}$ poset of rank $r$, or more generally, a lower Eulerian poset (i.e., a graded poset such that all of its intervals are Eulerian posets, that is, for each interval, the number of elements of even degree equals the number of elements of odd degree), and define recursively the following polynomials in $x$, $g(P,x)$, $h(P,x)$ and $f(P,x)$:
$h(P,x)=x^{r}f(P,\frac{1}{x})$,
$f(\emptyset,x)=g(\emptyset,x)=1$,  for $f(P,x)=h_0+h_1x+h_2x^2+...$ let
$$g(P,x)=h_0+(h_1-h_0)x+...+(h_{\left\lfloor\frac{r}{2}\right\rfloor}-h_{\left\lfloor\frac{r}{2}\right\rfloor-1})x^{\left\lfloor\frac{r}{2}\right\rfloor},$$ and define
$$f(P,x)=\sum_{t\in P} g([\hat{0},t),x)(x-1)^{r-r(t)}$$
where $r(t)$ is the rank of $t$ and for $w,y\in P$
$$[w,y):=\{z\in P:\ w\leq z <y\}.$$

When adding a maximal element to $P$ results in an Eulerian poset,
we have $f(P,x)=h(P,x)$.
If $P$ is the face poset of an Eulerian simplicial complex $\Delta$ then $g(P)$ coincides with the simplicial $g$-vector $g(\Delta)$.

For $P$ the face poset of the boundary of a polytope $P'$, $h(P)$ encodes the dimensions of the combinatorial intersection homology associated with $P'$ \cite{BBFK-2002,BBFK-2005,Bressler-Lunts}, and a hard-Lefschetz type theorem for this module shows the nonnegativity of $g(P)$ \cite{Karu:HLforPolytopes}.

For a poset $P$ denote by $\widehat{P}$ the poset obtained by adding to it a maximum $\hat{1}$. If $P$ has a maximum $\hat{1}$, let $\partial P := P - \{\hat{1}\}$.

For posets $P$ and $Q$ let $P*Q$ be their product poset, i.e. $(w,y)\leq (w',y')$ in $P*Q$ iff  $w\leq w'$ in $P$ and
$y\leq y'$ in $Q$. We need the following facts, stated by Kalai \cite{Kalai:NewBasis} for polytopes.
\begin{observation}\label{obs:Kalai}
Let $P_1$ and $P_2$ be Eulerian posets. Then

(a) $g(\partial(P_1*P_2),x)=g(\partial P_1,x)\cdot g(\partial P_2,x)$, and

(b) $h(\partial P_1 * \partial P_2,x)=h(\partial P_1,x) \cdot h(\partial P_2,x)$.
\end{observation}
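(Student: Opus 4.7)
The plan is to prove (a) and (b) simultaneously by induction on $r_1 + r_2$, where $r_i$ denotes the rank of the Eulerian poset $P_i$. The base cases ($r_1$ or $r_2$ at most $1$) are immediate. For the inductive step, in both parts I expand the $f$-polynomial of the left-hand side via Stanley's recursive definition, exploiting the product structure $[\hat{0}, (t_1, t_2)] = [\hat{0}_1, t_1] * [\hat{0}_2, t_2]$ of closed intervals in $P_1 * P_2$ to invoke the inductive case of (a) on strictly smaller intervals.

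For (b), each pair $(t_1, t_2) \in \partial P_1 * \partial P_2$ has $t_i < \hat{1}_i$, so the closed product interval has rank at most $r_1 + r_2 - 2 < r_1 + r_2$ and induction gives $g([\hat{0}, (t_1, t_2)), x) = g([\hat{0}_1, t_1), x) \cdot g([\hat{0}_2, t_2), x)$. The defining sum for $f(\partial P_1 * \partial P_2, x)$ then factors as $f(\partial P_1, x) \cdot f(\partial P_2, x)$; since each $P_i$ is Eulerian, $f(\partial P_i, x) = h(\partial P_i, x)$, and reversing coefficients via $h(P, x) = x^r f(P, 1/x)$ yields (b).

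For (a), expanding $f(\partial(P_1 * P_2), x)$ and applying induction on every subinterval, the sum collapses to
\[
(x-1)^{-1}\bigl[f(P_1, x) f(P_2, x) - g(\partial P_1, x) g(\partial P_2, x)\bigr],
\]
the subtracted term absorbing the missing top. The auxiliary identity $f(P_i, x) = g(\partial P_i, x) + (x-1) h(\partial P_i, x)$ for Eulerian $P_i$, obtained by isolating $t = \hat{1}_i$ in the defining sum for $f(P_i, x)$, rewrites this as $g_1 h_2 + h_1 g_2 + (x-1) h_1 h_2$, with $g_i := g(\partial P_i, x)$ and $h_i := h(\partial P_i, x)$. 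Since $P_1 * P_2$ is itself Eulerian, the left-hand side equals $h(\partial(P_1 * P_2), x)$. To identify its $g$-polynomial I invoke the equivalent form $(x-1) h(P, x) = x^{r+1} g(P, 1/x) - g(P, x)$, which characterizes $g$ from a palindromic $h$ of degree $r$ subject to $\deg g \le \lfloor r/2 \rfloor$. Substituting this same identity for $h_1$ and $h_2$ and expanding, the cross terms cancel and one arrives at
\[
(x-1) h(\partial(P_1 * P_2), x) = x^{r_1 + r_2} (g_1 g_2)(1/x) - g_1(x) g_2(x).
\]
A parity check gives $\deg(g_1 g_2) \le \lfloor (r_1 + r_2 - 1)/2 \rfloor$, so the uniqueness of the $g$-polynomial forces $g(\partial(P_1 * P_2), x) = g_1(x) g_2(x)$, proving (a).

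The main obstacle is the last step of (a): even with a clean expression for $h(\partial(P_1 * P_2), x)$, extracting its $g$-polynomial is not automatic and requires both the palindromic characterization and a careful degree-bound verification. The two supporting identities --- the Eulerian decomposition of $f(P_i, x)$ and the palindromic identity $(x-1) h = x^{r+1} g(1/x) - g$ --- are short bookkeeping consequences of Stanley's recursive definitions but must be set up before the main calculation.
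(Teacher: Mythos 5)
Your proof is correct and takes essentially the same route as the paper's: a joint induction that first factors $f(Q_1*Q_2)$ to obtain (b), then expresses $f(\partial(P_1*P_2),x)$ in the form $g_1 h_2 + g_2 h_1 + (x-1)h_1 h_2$ (the paper derives this by the three-case decomposition of lower intervals, you by adding and subtracting the top term together with $f(P_i)=g_i+(x-1)h_i$, both equivalent), and finally extracts $g_1 g_2$ from this via palindromicity and a degree bound. Your closing step, $(x-1)h(\partial(P_1*P_2),x)=x^{r_1+r_2}(g_1g_2)(1/x)-g_1g_2(x)$ plus uniqueness, is algebraically the same as the paper's $\bar g$ trick, since $\bar g(\partial P_i,x)=-x^{r_i}g_i(1/x)$.
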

For completeness, we include their proofs.
\begin{proof}
We prove part (a) by induction on $r':=r(\partial(P_1*P_2))$. The case $r'\leq 1$ is trivial. For part (b) we prove more generally for lower Eulerian posets $Q_1,Q_2$ that
$f(Q_1 * Q_2,x)=f(Q_1,x) \cdot f(Q_2,x)$, by induction on $r'':=r(Q_1*Q_2)$. Call this assertion (b$''$). The case $r'' \leq 1$ is trivial.

First we prove (b$''$) for $r''$ assuming (a) is true for all $r'<r''$:
\begin{equation*}
\begin{split}
f(&Q_1 * Q_2,x)=\\
&\sum_{F_1\in Q_1, F_2\in Q_2}g(\partial([\hat{0},F_1]*[\hat{0},F_2]))(x-1)^{r(Q_1)+r(Q_2) -r(F_1) -r(F_2)}\\
&=
(\sum_{F_1\in Q_1}g([\hat{0},F_1))(x-1)^{r(Q_1)-r(F_1)})(\sum_{F_2\in Q_2}g([\hat{0},F_2))(x-1)^{r(Q_2)-r(F_2)})\\
&\qquad\qquad= f(Q_1,x)f(Q_2,x),
\end{split}
\end{equation*}
where the second equality follows by induction from part (a).
Thus, (b$''$) follows, hence also (b).

Next we prove (a) for $r'=l$ assuming (a) is true for $r'<l$.
Each lower interval of $\partial(P_1*P_2)$ satisfies exactly one of the following 3 possibilities - it contains either $P_1$, or $P_2$, or neither. Thus, using the recursive definition of $f(P,x)$ and the induction hypothesis we get
\begin{equation*}
\begin{split}
f(\partial(P_1*P_2),x)= g(\partial P_1,x)f(\partial P_2,x)
&+ g(\partial P_2,x)f(\partial P_1,x)\\
&+ (x-1)f(\partial P_1,x)f(\partial P_2,x)
.\end{split}
\end{equation*}

Let $T_r$ be the operator that truncates a polynomial at degree $r$. Thus, if $P$ is an Eulerian poset of rank $r$, $g(P,x)=T_{\left\lfloor\frac{r}{2}\right\rfloor}((1-x)f(P,x))$. Clearly,
the polynomial $\bar{g}(P,x):=(1-x)f(P,x)-g(P,x)$ satisfies that all its nonzero terms have degree larger than $\left\lfloor\frac{r}{2}\right\rfloor$. Thus,
\begin{equation*}
\begin{split}
(1-x)f(\partial(P_1*P_2),&x) =
g(\partial P_1,x) [g(\partial P_2,x)+\bar{g}(\partial P_2,x)]\\
&\qquad+
g(\partial P_2,x)[g(\partial P_1,x)+\bar{g}(\partial P_1,x)]\\
& -
[g(\partial P_1,x)+\bar{g}(\partial P_1,x)][g(\partial P_2,x)+\bar{g}(\partial P_2,x)] \\
&\quad=
g(\partial P_1,x) g(\partial P_2,x) -
\bar{g}(\partial P_1,x) \bar{g}(\partial P_2,x),
\end{split}
\end{equation*}
and we conclude that
\begin{equation*}
\begin{split}
g(\partial(P_1*P_2),x)&=T_{\left\lfloor\frac{l}{2}\right\rfloor}(g(\partial P_1,x) g(\partial P_2,x) -
\bar{g}(\partial P_1,x) \bar{g}(\partial P_2,x))\\
& =  g(\partial P_1,x) g(\partial P_2,x).
\end{split}
\end{equation*}
\end{proof}


\textbf{Bier posets.}
Bier posets were introduced in \cite{BierPosets-BPSZ}, generalizing the construction of Bier spheres, see e.g. \cite{Matousek-BU}. Here we slightly modify the definition in order to simplify notation.
Let $P$ be a finite poset with a minimum $\hat{0}$ and add to it a new element $\hat{1}$ greater than all elements of $P$ to obtain the poset $\hat{P}=P\cup \{\hat{1}\}$. Let $I$ be a proper ideal in $\hat{P}$. Then the poset $\Bier(P,I)$ consists of all intervals $[x,y]$ where $x\in I, y\in \hat{P}-I$, ordered by reverse inclusion.
If $\hat{P}$ is Eulerian then $\Bier(P,I)$ is Eulerian, and
the order complexes of the posets $P-\{\hat{0}\}$ and $\Bier(P,I)-\{[\hat{0},\hat{1}]\}$ are homeomorphic \cite{BierPosets-BPSZ}. In particular,

\begin{theorem}(\cite{BierPosets-BPSZ})\label{thm:BPSZ}
Let $I$ be an ideal in a poset $P$.
If $P$ is Gorenstein$^{*}$ then $\Bier(P,I)$ is Gorenstein$^{*}$.
\end{theorem}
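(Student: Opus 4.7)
\medskip

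\noindent\textbf{Proof proposal.}
The plan is to verify the two defining conditions for $\Bier(P,I)$ to be Gorenstein$^{*}$ separately: (i) that $\widehat{\Bier(P,I)}$ is Eulerian, and (ii) that the order complex of $\Bier(P,I)-\{\hat{0}'\}$, with $\hat{0}':=[\hat{0},\hat{1}]$, is Cohen-Macaulay.

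For (i), I would induct on $R:=r(\hat{P})$ and check that every closed interval of $\widehat{\Bier(P,I)}$ is Eulerian. An interval $[[x,y],[x',y']]_{\Bier}$ with both endpoints in $\Bier(P,I)$ is canonically isomorphic to the product $[x,x']_{\hat{P}}\times [y',y]_{\hat{P}}^{\mathrm{op}}$ of Eulerian intervals of $\hat{P}$ --- the membership conditions on intermediate first/second coordinates are automatic since $I$ is an ideal and $\hat{P}-I$ a filter --- so it is Eulerian. An upper interval $[[x,y],\hat{1}']_{\widehat{\Bier}}$ with $[x,y]\neq\hat{0}'$ is naturally isomorphic to $\widehat{\Bier(\tilde{P},\tilde{I})}$, where $\widehat{\tilde{P}}=[x,y]_{\hat{P}}$ and $\tilde{I}=I\cap[x,y]_{\hat{P}}$; since $r(\widehat{\tilde{P}})=r(y)-r(x)<R$, this is Eulerian by the inductive hypothesis.

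The only remaining interval is $\widehat{\Bier(P,I)}$ itself, which I would handle by a direct M\"{o}bius computation. Using the rank formula $r([x,y])=r_{\hat{P}}(x)+R-r_{\hat{P}}(y)$, Eulerianness of the top interval reduces to
$$\sum_{x\in I,\ y\in\hat{P}-I,\ x\leq y}(-1)^{r(x)+r(y)}=-1.$$
For each fixed $x\in I$, Eulerianness of $[x,\hat{1}]_{\hat{P}}$ gives $\sum_{y\geq x}(-1)^{r(y)}=0$, so the inner sum over $y\in\hat{P}-I$ equals $-\sum_{y\in I,\ y\geq x}(-1)^{r(y)}$. Swapping summation and using that $I$ is an ideal (so $x\leq y\in I$ forces $x\in I$), the resulting $y$-indexed inner sum is $\sum_{x\leq y}(-1)^{r(x)}$, which by Eulerianness of $[\hat{0},y]_{\hat{P}}$ vanishes for $y\neq\hat{0}$ and equals $1$ for $y=\hat{0}$; thus only the $y=\hat{0}$ term survives, yielding $-1$ as required.

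For (ii), I would invoke the homeomorphism between $\|\Delta(\Bier(P,I)-\hat{0}')\|$ and $\|\Delta(P-\hat{0})\|$ cited just before the theorem statement. Since $P$ is Gorenstein$^{*}$, $\Delta(P-\hat{0})$ is a homology sphere; Cohen-Macaulayness is a topological invariant of the geometric realization, so $\Delta(\Bier(P,I)-\hat{0}')$ is Cohen-Macaulay as well. Combining with (i) yields the theorem.

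The principal obstacle is the homeomorphism just used. A natural strategy is to send a chain $[x_0,y_0]<_{\Bier}\cdots<_{\Bier}[x_k,y_k]$ in $\Bier(P,I)-\hat{0}'$ to the chain in $\hat{P}-\{\hat{0},\hat{1}\}$ formed by its distinct $x_i$'s (which all lie in $I$) followed by its distinct $y_j$'s (which all lie in $\hat{P}-I$), in $\hat{P}$-order, and then to verify that the induced map on geometric realizations is a homeomorphism rather than just a homotopy equivalence; this step is where the real combinatorial-topological work lies.
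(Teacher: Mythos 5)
Your overall structure is right and matches what the cited source establishes: reduce to (i) Eulerianness of $\widehat{\Bier(P,I)}$ and (ii) the topological identification of $\|\Delta(\Bier(P,I)-\hat{0}')\|$ with $\|\Delta(P-\hat{0})\|$, then combine. The paper itself does not reprove this --- it cites \cite{BierPosets-BPSZ} for both facts --- so the comparison is really with that reference.

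Your part (i) is a complete and correct argument, and it is a nice self-contained version of the Eulerianness claim. The interval trichotomy is exhaustive: proper intervals with top in $\Bier(P,I)$ are products $[x,x']\times[y',y]^{\mathrm{op}}$ of Eulerian posets (the ideal/filter conditions on intermediate coordinates are indeed automatic), proper upper intervals $[[x,y],\hat{1}']$ with $[x,y]\neq\hat{0}'$ are $\widehat{\Bier(\tilde P,\tilde I)}$ for $\widehat{\tilde P}=[x,y]_{\hat P}$ of strictly smaller rank (since $(x,y)\neq(\hat 0,\hat 1)$), and the top interval is handled directly. The M\"obius computation is correct: the sum telescopes to $-\sum_{y\in I}(-1)^{r(y)}\sum_{x\le y}(-1)^{r(x)}$, and only $y=\hat 0$ contributes, using that $\hat 0\in I$ (an implicit but harmless nondegeneracy assumption, since an empty ideal gives $\Bier(P,I)=\emptyset$). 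One cosmetic point: you should state explicitly that the induction hypothesis is ``$\hat P$ Eulerian $\Rightarrow\widehat{\Bier(P,I)}$ Eulerian,'' since you only ever use Eulerianness of $\hat P$ and not the full Gorenstein$^*$ hypothesis in this part.

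The genuine gap, which you flag yourself, is in part (ii): the homeomorphism. Your proposed chain-level map $[x_0,y_0]<\cdots<[x_k,y_k]\mapsto\{x_i\}\cup\{y_j\}$ is not a simplicial isomorphism and not even a simplicial map in the naive sense; what \cite{BierPosets-BPSZ} actually shows (their Theorem 2.2, which the present paper invokes again in Section 4) is that $\Delta(\Bier(P,I)-\hat{0}')$ arises from the barycentric subdivision $\mathrm{sd}\,\Delta(P-\hat 0)$ by a finite sequence of edge subdivisions, and the homeomorphism is the composite of the obvious maps for each subdivision step. Without exhibiting (or citing) that subdivision structure, part (ii) is an appeal to authority rather than a proof, exactly like the paper's own citation. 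So the net effect of your proposal is: part (i) is proved from scratch and correctly, while part (ii) is recognized but not discharged. That is a fair reading of the difficulty, but you should not present the homeomorphism as a plausible-looking map to be ``verified''; it is a subdivision theorem and the whole content lies there.

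One small additional remark: once you have the homeomorphism, invoking topological invariance of Cohen--Macaulayness (Munkres) is correct, but for the Gorenstein$^*$ conclusion you are really using that being a homology sphere is a topological invariant, and then using the paper's equivalence ``$\hat P$ Eulerian and Cohen--Macaulay $\iff$ $P$ Gorenstein$^*$'' to split the work into your (i) and (ii). That split is exactly right; just keep the bookkeeping explicit so it is clear that (i)+(ii) together, and not (ii) alone, give Gorenstein$^*$.
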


In the case where $P$ is the face poset of the boundary of a simplex the posets $\Bier(P,I)$ are exactly the face posets of the Bier spheres, and each of their $g$-vectors is shown to be a Kruskal-Katona vector \cite{BierPosets-BPSZ}, and in particular an $M$-sequence, thus nonnegative. Note that the number of vertices in a $d$-dimensional Bier sphere is at most $2(d+2)$.
Note that for $I=P$, $\Bier(P,I)$ is isomorphic to $P$.

\textbf{Multiplex.}
Multiplexes were introduced by Bisztriczky \cite{Bisztriczky-Multiplex1} as a generalization of simplices.
For any integers $n\geq d\geq 2$ there exists a $d$-dimensional polytope on $n+1$ vertices $x_0,x_1,...,x_n$ with facets $$F_i:=\rm{conv}(x_{i-d+1},x_{i-d+2},...,x_{i-1},x_{i+1},x_{i+2},...,x_{i+d-1})$$ for $0\leq i\leq n$, with the convention that $x_j=x_0$ if $j<0$ and $x_j=x_n$ if $j>n$. Such polytope was constructed in \cite{Bisztriczky-Multiplex1}, is called a \emph{multiplex}, and we denote it by $M^{d,n}$. Note that $M^{d,d}$ is a simplex. We need the following known properties of multiplexes, shown by Bisztriczky \cite{Bisztriczky-Multiplex1} and  Bayer et. al. \cite{BayerBrueningStewart-Multiplex}.
\begin{theorem}\label{thm:multiplex}
(a) Every multiplex is self dual. (\cite{Bisztriczky-Multiplex1})

(b) If $F$ is a face in a multiplex $M$ then both $F$ and the quotient $M/F$ are multiplexes. (\cite{Bisztriczky-Multiplex1})

(c) $M^{d,n}$ has the same flag $f$-vector as the $(d-2)$-fold pyramid over the $(n-d+3)$-gon. Thus, for $P$ the boundary poset of $M^{d,n}$ we get $g(P,x)=1+(n-d)x$ and $h(P,x)=1+(n-d+1)x+...+(n-d+1)x^{d-1}+x^d$. (\cite{BayerBrueningStewart-Multiplex})
\end{theorem}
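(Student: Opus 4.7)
The plan is to prove the three assertions — Gorenstein$^*$, meet semi-lattice, and nonnegativity of $g(P)$ — separately, settling the first two structurally and the third by an induction on $|M\setminus I|$ driven by a local move on the ideal.

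Since $M$ is the boundary poset of a polytope, it is Gorenstein$^*$, so Theorem \ref{thm:BPSZ} gives at once that $\Bier(M,I)$ is Gorenstein$^*$. For the meet semi-lattice property, I would show that the meet in $P=\Bier(M,I)$ of two elements $[x,y]$ and $[x',y']$ (under reverse inclusion) is $[x\wedge x',\,y\vee y']$, with $\wedge$ and $\vee$ taken in the lattice $\hat{M}$. Since $\hat{M}$ is a lattice (being the augmented face lattice of a polytope boundary), both operations exist; one then checks that $x\wedge x'\in I$ because $I$ is downward closed and $x\wedge x'\leq x\in I$, that $y\vee y'\notin I$ because $I$ is downward closed and $y\notin I$, and that $x\wedge x'\leq x\leq y\leq y\vee y'$, so the interval is valid and is indeed the greatest common lower bound.

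For the nonnegativity of $g(P)$, I would induct on $|M\setminus I|$. In the base case $I=M$ we have $\Bier(M,M)\cong M$, and Theorem \ref{thm:multiplex}(c) yields $g(M,x)=1+(n-d)x\geq 0$. For the inductive step, pick a minimal $t\in M\setminus I$ and set $I^+=I\cup\{t\}$; then $I^+$ is still an ideal containing all elements of rank at most $\lceil d/2\rceil+1$, and $|M\setminus I^+|<|M\setminus I|$, so by induction $g(\Bier(M,I^+))$ is nonnegative. The posets $P$ and $P^+=\Bier(M,I^+)$ differ locally: the elements $[x,t]$ for $x\in I$ with $x<t$ belong to $P$ but not to $P^+$, while the elements $[t,y]$ for $y\in\hat{M}\setminus I$ with $y>t$ belong to $P^+$ but not to $P$. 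Expanding the recursion for $f$ around these moved elements, and using that the intervals $[\hat{0},t]$ and $[t,\hat{1}]$ in $\hat{M}$ are themselves multiplex boundary posets by Theorem \ref{thm:multiplex}(b), the product formula of Observation \ref{obs:Kalai}(a) should let me express $g(P,x)-g(P^+,x)$ as a combination of products of $g$-polynomials of smaller multiplexes.

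The main obstacle is making this local-move analysis quantitative, yielding a clean formula from which one can read off that $g(P,x)$ is nonnegative given $g(P^+,x)$ is. The rank hypothesis $r(t)\geq\lceil d/2\rceil+2$ is essential here: since $g$ is truncated at degree $\lfloor d/2\rfloor$, contributions from the change that live in higher degrees are discarded, and the surviving low-degree contributions should factor through $g$-polynomials of smaller multiplex posets (nonnegative of the form $1+(n'-d')x$). Organizing all these products so that each coefficient of $g(P,x)$ comes out nonnegative — in particular, showing they combine with consistent signs — is where the bulk of the technical work lies.
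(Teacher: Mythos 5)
Your proposal does not prove Theorem \ref{thm:multiplex}; it sketches a proof of Theorem \ref{thm:main} instead. Theorem \ref{thm:multiplex} records three background facts about multiplexes — self-duality, closure under faces and quotients, and the flag $f$-vector / $g$- and $h$-polynomial formulas — which the paper cites from Bisztriczky and from Bayer--Bruening--Stewart and does not reprove. Indeed, your own argument \emph{invokes} Theorem \ref{thm:multiplex}(b) and (c) as ingredients, which confirms you are treating it as a black box rather than establishing it. None of your three steps (Gorenstein$^*$ via Bier, the meet formula $[x\wedge x', y\vee y']$, the induction on $|M\setminus I|$) bears on self-duality of $M^{d,n}$, on faces/quotients being multiplexes, or on the flag $f$-vector coinciding with that of the $(d-2)$-fold pyramid over an $(n-d+3)$-gon.

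Even read as an attempt at Theorem \ref{thm:main}, the argument is incomplete where it matters. Your Gorenstein$^*$ and meet-semilattice parts match the paper, and inducting on $|M\setminus I|$ by adding a minimal $t\in M\setminus I$ to the ideal is the right skeleton. But you explicitly leave the crux as an ``obstacle'': you never produce the formula for $g(\Bier(M,I),x)-g(\Bier(M,I\cup\{t\}),x)$ nor verify its coefficientwise nonnegativity. That is exactly what Lemma \ref{lem:bistellar} (the general difference formula $h([\hat{0},t),x)\,g((t,\hat{1}]^*,x)-g([\hat{0},t),x)\,h((t,\hat{1}]^*,x)$ via the $f$-recursion and Observation \ref{obs:Kalai}) and Lemma \ref{lem:g(M)-bistellar} (the specialization to multiplexes using the closed forms from Theorem \ref{thm:multiplex}(c), where the rank bound $r(t)\geq\lceil d/2\rceil+2$ kills the potentially negative low-degree terms) accomplish. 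Without carrying out that computation, the inductive step is a plan, not a proof.
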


\section{Computing $g(\Bier(P,I))$}\label{sec:cumpute-g}


We start by analyzing the effect of a certain local move on a pair $(P,I)$, of a Gorenstein$^*$ poset and an ideal in it, on the toric g-vector of $\Bier(P,I)$.
This local move can be thought of as a generalization of bistellar moves on simplicial spheres (however, this connection is not necessary for the topic of this paper and will not be stressed here).

Denote by $X^*$ the opposite of a poset $X$, that is its elements are $\{x^*:x\in X\}$ and $x<y$ in $X$ iff $y^*<x^*$ in $X^*$.

\begin{lemma}\label{lem:bistellar}
Let $P$ be a Gorenstein$^*$ poset, let $Q$ be an ideal in $P$, different from $\{\hat{0}\}$, and let $t$ be a maximal element in $Q$.
Then:
%
\begin{equation*}
\begin{split}
h(\Bier(P,Q-\{t\}),&x)-h(\Bier(P,Q),x)\\
& = h([\hat{0},t),x)
g((t,\hat{1}]^*,x) - g([\hat{0},t),x)
h((t,\hat{1}]^*,x),
\end{split}
\end{equation*}
 where $\hat{1}$ is the maximum of $\hat{P}$.
 (In particular, $h(\Bier(P,Q-\{t\}),x)-h(\Bier(P,Q),x)$ is independent of the ideal $Q$ as long as $t$ is a maximal element in the ideal.)
\end{lemma}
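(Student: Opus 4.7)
The plan is to compute $f(B_1,x) - f(B_2,x)$ termwise from the recursive definition of $f$, writing $B_1 := \Bier(P, Q-\{t\})$ and $B_2 := \Bier(P,Q)$. Since both are Gorenstein$^*$ by Theorem \ref{thm:BPSZ}, this will in fact give $h(B_1,x) - h(B_2,x)$. First, since $t$ is maximal in $Q$, unwinding the Bier-membership conditions gives $B_1 \setminus B_2 = \{[s,t] : s \in [\hat{0},t)\}$ and $B_2 \setminus B_1 = \{[t,y] : y \in (t,\hat{1}]\}$. For any common element $u = [s,y]$, I check that the lower interval $[\hat{0},u)$ agrees in $B_1$ and $B_2$: a would-be element $[s',y'] < u$ belonging to one but not the other would have to satisfy $s' = t$ or $y' = t$, but $s \neq t$ together with the maximality of $t$ forces $s' \neq t$, while $y \notin Q$ together with the ideal property forces $y' \neq t$. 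So the contributions of common elements cancel in the difference.

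Second, using the rank formula $r_B([s,y]) = r(s) + r + 1 - r(y)$ on a Bier poset of rank $r$, the lower interval of $u = [s,t] \in B_1 \setminus B_2$ is $\{[s',y'] : s' \leq s,\ y' \geq t\} \setminus \{[s,t]\}$, which under reverse inclusion is isomorphic to $\partial([\hat{0},s] * [t,\hat{1}]^*)$; symmetrically, the lower interval of $[t,y] \in B_2 \setminus B_1$ is $\partial([\hat{0},t] * [y,\hat{1}]^*)$. (Again, every element of these posets automatically satisfies the required Bier-membership by the same ideal arguments.) Applying Observation \ref{obs:Kalai}(a) factors each $g$-polynomial as a product and, after pulling the factors independent of the summation variable out front, yields
\begin{equation*}
\begin{split}
f(B_1,x) - f(B_2,x) = & \ g((t,\hat{1}]^*, x) \sum_{s \in [\hat{0},t)} g([\hat{0},s), x)(x-1)^{r(t)-r(s)-1} \\
& - g([\hat{0},t), x) \sum_{y \in (t,\hat{1}]} g((y,\hat{1}]^*, x)(x-1)^{r(y)-r(t)-1}.
\end{split}
\end{equation*}

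The last step is to recognize the two sums. Applied to the lower Eulerian poset $[\hat{0},t)$ of rank $r(t)-1$, the recursive definition of $f$ identifies the first sum as $f([\hat{0},t),x)$; applied to $(t,\hat{1}]^*$ of rank $r - r(t)$, it identifies the second as $f((t,\hat{1}]^*,x)$. In each case, adding back the top element yields an Eulerian poset ($[\hat{0},t]$ and $[t,\hat{1}]^*$ respectively), so $f$ may be replaced by $h$, and the identity of the lemma follows. I expect the main obstacle to be the bookkeeping in the first two steps — pinning down the rank function on $\Bier(P,I)$, isolating precisely which elements change when $t$ is removed from $Q$, and recognizing the lower interval of a ``new'' element as a punctured product so that Observation \ref{obs:Kalai}(a) applies cleanly.
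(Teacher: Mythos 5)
Your argument is correct and follows essentially the same route as the paper's: identify $B_1\setminus B_2$ and $B_2\setminus B_1$ via the ideal/maximality conditions, observe that lower intervals of common elements agree so their contributions cancel in $f(B_1)-f(B_2)$, recognize the lower intervals of the uncommon elements as punctured products, factor with Observation~\ref{obs:Kalai}(a), and reassemble the two sums as $f$-polynomials (with $f=h$ by Gorensteinness/Eulerianness). As a minor bonus, your exponents $(x-1)^{r(t)-r(s)-1}$ and $(x-1)^{r(y)-r(t)-1}$ are the ones actually forced by the rank formula on the Bier poset and are what is needed to recognize the two sums as $f([\hat{0},t),x)$ and $f((t,\hat{1}]^*,x)$ respectively; the paper's displayed exponents $r(t)-r(z)$ and $r(y)-r(t)$ appear to be off by one, though this does not affect its final identity.
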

\begin{proof}

By Theorem \ref{thm:BPSZ},
the h-vectors on the left are of Gorenstein$^*$ posets, hence are symmetric, thus
the left hand side equals
\begin{equation*}
f(\Bier (P,Q-\{t\}),x) - f( \Bier (P,Q),x),
\end{equation*}
which, by definition, equals
\begin{equation*}
\begin{split}
\sum_{z:t>z\in P} g([[\hat{0},\hat{1}],[z,t]),x)&(x-1)^{r(t)-r(z)}\\
&- \sum_{y:t<y\in P} g([[\hat{0},\hat{1}],[t,y]),x)(x-1)^{r(y)-r(t)}.
\end{split}
\end{equation*}
The last expression equals
\begin{equation*}
\begin{split}
\sum_{z\in [\hat{0},t)} g(\partial([\hat{0},z]*[t,\hat{1}]^*),x)&(x-1)^{r(t)-r(z)}\\
&- \sum_{y\in (t,\hat{1}]} g(\partial([\hat{0},t]*[y,\hat{1}]^*),x)(x-1)^{r(y)-r(t)}
,\end{split}
\end{equation*}
 and by Observation \ref{obs:Kalai} it equals
\begin{equation*}
\begin{split}
\sum_{z\in [\hat{0},t)} g([\hat{0},z),x)&g((t,\hat{1}]^*,x)(x-1)^{r(t)-r(z)}\\
&\qquad- \sum_{y\in (t,\hat{1}]} g([\hat{0},t),x)g((y,\hat{1}]^*,x)(x-1)^{r(y)-r(t)}
\\
&=
g((t,\hat{1}]^*,x)f([\hat{0},t),x) - g([\hat{0},t),x)f((t,\hat{1}]^*,x).
\end{split}
\end{equation*}
As the posets on the right hand side are Gorenstein$^*$, the assertion follows.
\end{proof}

We now specialize to the case of a multiplex.
Let $M$ be the boundary poset of the multiplex $M^{d,n}$, $I\neq \{\hat{0}\}$ an ideal in $M$, $t$ a maximal element in $I$, and denote $\Delta h:=h(\Bier(M,I-\{t\}),x)-h(\Bier(M,I),x)$
and
$$\Delta g:=g(\Bier(M,I-\{t\}),x)- g(\Bier(M,I),x).
$$

\begin{lemma}\label{lem:g(M)-bistellar}
With notation as above, if $d>r(t)> \lceil \frac{d}{2}\rceil +1$ then
\begin{equation*}
\begin{split}
\Delta g =g_1((t,\hat{1}]^*)x^{d-r(t)} + (g_1([\hat{0},t))g_1((t,\hat{1}]^*)&+1)x^{d-r(t)+1} \\
&+ g_1([\hat{0},t))x^{d-r(t)+2},
\end{split}
\end{equation*}
and hence is nonnegative, and if $r(t)=d>\lceil \frac{d}{2}\rceil$ (thus $t$ is a facet) then
$\Delta g= x+g_1([\hat{0},t))x^2$ and hence is nonnegative.
\end{lemma}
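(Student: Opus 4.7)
The plan is to unpack Lemma~\ref{lem:bistellar} and then identify the two intervals appearing there as boundary posets of multiplexes, so that Theorem~\ref{thm:multiplex} supplies explicit expressions for all the $g$- and $h$-polynomials in sight. By Theorem~\ref{thm:multiplex}(b), the lower interval $[\hat{0}, t)$ is the boundary poset of the face $t$, itself a multiplex of dimension $r(t)-1$. For the upper interval, $[t, \hat{1}]$ is isomorphic to the face lattice of $M/t$, which is a multiplex of dimension $d-r(t)$ by Theorem~\ref{thm:multiplex}(b); by the self-duality of multiplexes (Theorem~\ref{thm:multiplex}(a)) this face lattice is isomorphic to its own opposite, and under such an isomorphism the bottom element corresponds to the top, so $(t,\hat{1}]^*$ (that face lattice with its bottom removed, order-reversed) is isomorphic to the boundary poset of a $(d-r(t))$-dimensional multiplex. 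Setting $a := g_1([\hat{0}, t))$ and $b := g_1((t,\hat{1}]^*)$, Theorem~\ref{thm:multiplex}(c) then gives $g([\hat{0}, t), x) = 1 + ax$ and
\[
h([\hat{0}, t), x) = 1 + (a+1)(x + x^2 + \cdots + x^{r(t)-2}) + x^{r(t)-1},
\]
together with analogous formulas for $(t, \hat{1}]^*$ on replacing $a$ and $r(t)-1$ by $b$ and $d-r(t)$.

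Next, I would substitute these into the right-hand side of Lemma~\ref{lem:bistellar} and expand coefficient by coefficient. The flat-middle shape of the two $h$-polynomials causes extensive cancellation: the hypothesis $r(t) > \lceil d/2 \rceil + 1$, equivalently $2r(t) \ge d + 4$, guarantees that the two middle plateaux overlap. A routine bookkeeping then shows that $\Delta h$ is palindromic about $d/2$, supported on degrees $d-r(t), \ldots, r(t)$, with extremal coefficients $b$, next-to-extremal coefficients $ab + b + 1$, and a flat middle of value $(a+1)(b+1)$ for $d-r(t)+2 \le i \le r(t)-2$. To recover $\Delta g$, I would take successive differences $\Delta g_i = \Delta h_i - \Delta h_{i-1}$ truncated at $\lfloor d/2 \rfloor$. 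The same hypothesis yields $d - r(t) + 2 \le \lfloor d/2 \rfloor \le r(t) - 2$, so the truncation point lands strictly inside the flat middle of $\Delta h$, and only the three differences at $i = d-r(t),\, d-r(t)+1,\, d-r(t)+2$ survive, with values $b,\, ab + 1,\, a$ respectively; this is exactly the stated formula.

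For the facet case $r(t) = d$, the interval $(t, \hat{1}]^*$ collapses to a one-element poset with $g = h = 1$, so Lemma~\ref{lem:bistellar} reduces to $\Delta h = h([\hat{0}, t), x) - g([\hat{0}, t), x) = x + (a+1)(x^2 + \cdots + x^{d-2}) + x^{d-1}$; since $d \ge 4$ gives $\lfloor d/2 \rfloor \le d - 2$, the successive differences yield $\Delta g = x + ax^2$. Nonnegativity in both cases is immediate from $a, b \ge 0$. The main obstacle is organizational rather than conceptual: one must verify carefully that $\lfloor d/2 \rfloor$ falls inside the flat-middle window of $\Delta h$, and the hypothesis on $r(t)$ is calibrated precisely to arrange this.
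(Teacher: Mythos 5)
Your proof is correct and follows essentially the same route as the paper's: identify $[\hat{0},t)$ and $(t,\hat{1}]^*$ as boundary posets of multiplexes via Theorem~\ref{thm:multiplex}(a,b), plug the explicit flat-middle $h$-polynomials from Theorem~\ref{thm:multiplex}(c) into the difference formula of Lemma~\ref{lem:bistellar}, and then read off the coefficients of $\Delta h$ up to $\lfloor d/2\rfloor$, checking that the hypothesis $r(t)>\lceil d/2\rceil+1$ places the truncation degree inside the flat plateau. The only cosmetic difference is that you additionally record the palindromy of $\Delta h$ and its full coefficient profile, whereas the paper computes only the coefficients up to $\lfloor d/2\rfloor$, which is all that is needed for $\Delta g$.
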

\begin{proof}
Denote $A=(t,\hat{1}]^*$ and $B=[\hat{0},t)$. They are the boundary posets of multiplexes by Theorem \ref{thm:multiplex}(a,b).
By Lemma \ref{lem:bistellar} and Theorem \ref{thm:multiplex}(c) we get for $r(t)<d$:
\begin{equation*}
\begin{split}
\Delta h=& (1+h_1(B)x+...+h_1(B)x^{r(t)-2}+x^{r(t)-1})(1+g_1(A)x)\\
&- (1+g_1(B)x)(1+h_1(A)x+...+h_1(A)x^{d-r(t)-1}+x^{d-r(t)}).
\end{split}
\end{equation*}
The coefficient of $x^j$ in $\Delta h$, denoted by $[x^j]$, equals $h_{j-1}(B)g_1(A)+h_j(B) - h_j(A) - g_1(B)h_{j-1}(A)$, with the convention $h_{-1}=0$. Thus, assuming  $r(t)<d$, for $j\leq \left\lfloor\frac{d}{2}\right\rfloor$ we get:
$[x^j]=0$ if $0\leq j<d-r(t)$,
$[x^{d-r(t)}]=g_1(A)$,
$[x^{d-r(t)+1}]=1+g_1(A)h_1(B)$,
and $[x^j]=h_1(A)h_1(B)$ for $d-r(t)+1<j\leq \left\lfloor\frac{d}{2} \right\rfloor$.
Thus, $\Delta g$ is as claimed for $r(t)<d$, and in particular is nonnegative by Theorem \ref{thm:multiplex}(c).

In case $r(t)=d$ we get that $\Delta h = x+h_1(B)x^2+...+h_1(B)x^{d-2}+x^{d-1}$ and $\Delta g= x+g_1(B)x^2$ is nonnegative.
\end{proof}

The conditions imposed on $r(t)$ in Lemma \ref{lem:g(M)-bistellar} imply that $d\geq 4$, which is the interesting range for Conjecture \ref{conj:StanleyToric-g}.

\begin{proof}
[Proof of Theorem \ref{thm:main}]
By repeated application of Lemma \ref{lem:g(M)-bistellar} we conclude the nonnegativity of $g(P)$ in Theorem \ref{thm:main}. By Theorem \ref{thm:BPSZ}, $P$ is Gorenstein$^*$, and clearly $P$ is a meet semi-lattice: let $\wedge_M, \wedge_P$ be the meet operations in $M,P$ respectively, and $\vee_M$ the join operation in $M$. Then $[a,b], [c,d]\in P$ satisfy $[a,b]\wedge_P[c,d] = [a\wedge_M c, b\vee_M d]$.
\end{proof}


In fact, Lemma \ref{lem:g(M)-bistellar} can be simplified, as we will show below. The inequality $g_2(M)\geq g_1([\hat{0},t))g_1((t,\hat{1}]^*)$ (see \cite{Braden-MacPherson:Kalai} for the general statement for polytopes, and e.g. \cite{Braden:Remarks} for the validity for arbitrary polytopes) together with $g_2(M)=0$ shows that at least one of $g_1([\hat{0},t)), g_1((t,\hat{1}]^*)$ is zero.
Here is a simple combinatorial criterion to tell which.
Bisztriczky \cite[Lemmas 3,4]{Bisztriczky-Multiplex1} described the $4$-gons among the $2$-faces of a multiplex. In particular:
\begin{observation}\label{obs:g1squares}
For a multiplex $M$, $g_1(M)$ is the number of $4$-gons among the $2$-dimensional faces of $M$.
\end{observation}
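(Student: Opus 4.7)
The plan is to reduce the observation to two ingredients via a short $f$-vector argument. Let $t$ and $q$ denote the numbers of triangle and quadrilateral $2$-faces of $M = M^{d,n}$. If every $2$-face is a triangle or a quadrilateral, then $f_2(M) = t + q$ and $f_{02}(M) = 3t + 4q$, so $q = f_{02}(M) - 3 f_2(M)$, and the claim $q = g_1(M) = n - d$ reduces to showing $f_{02}(M) - 3 f_2(M) = n - d$.

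For the first ingredient, that every $2$-face of $M$ is a triangle or a quadrilateral whenever $d \geq 3$, I would induct on $d$ using Theorem \ref{thm:multiplex}(b); this is also essentially the content of \cite[Lemmas 3, 4]{Bisztriczky-Multiplex1}. In the base case $d = 3$ the $2$-faces are the facets $F_i = \{x_{i-d+1}, \ldots, x_{i+d-1}\} \setminus \{x_i\}$, each having at most $2(d-1) = 4$ distinct vertices after the boundary identifications $x_j = x_0$ (for $j < 0$) and $x_j = x_n$ (for $j > n$), hence each is a triangle or a $4$-gon. For $d \geq 4$, every $2$-face of $M$ already lies in some facet $F_i$, which by Theorem \ref{thm:multiplex}(b) is itself a multiplex of dimension $d - 1 \geq 3$, so the inductive hypothesis applies.

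For the second ingredient I would invoke Theorem \ref{thm:multiplex}(c): $M$ has the same flag $f$-vector as the $(d-2)$-fold pyramid $P = Q * \Delta_{d-3}$ over an $(n-d+3)$-gon $Q$, so it suffices to compute $f_{02} - 3 f_2$ in $P$. The $2$-faces of $P$ consist of one copy of $Q$ (an $(n-d+3)$-gon) together with a family of triangles arising from joins of faces of $Q$ and $\Delta_{d-3}$ of complementary dimensions. Each triangle contributes $3 - 3 = 0$ to $f_{02}(P) - 3 f_2(P)$, while the base $Q$ contributes $(n - d + 3) - 3 = n - d$, giving the required identity. The main obstacle is the first ingredient -- ruling out $2$-faces with $\geq 5$ vertices -- while the remainder is direct bookkeeping; the case $d = 2$ is genuinely degenerate, and the observation should be read as referring only to proper $2$-faces there.
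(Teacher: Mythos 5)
Your argument is correct, but it follows a route the paper does not take. The paper treats Observation~\ref{obs:g1squares} as an immediate consequence of Bisztriczky's explicit enumeration \cite[Lemmas~3,4]{Bisztriczky-Multiplex1} of the nonsimplex $2$-faces of $M^{d,n}$: they are precisely $\mathrm{conv}(x_i,x_{i+1},x_{i+d},x_{i+d+1})$ for $0\leq i\leq n-d-1$ (this description is quoted in Section~\ref{sec:last}), so there are exactly $n-d = g_1(M)$ of them, with $g_1$ read off from Theorem~\ref{thm:multiplex}(c). You instead prove the weaker qualitative fact that no $2$-face has five or more vertices (by an induction on $d$ through Theorem~\ref{thm:multiplex}(b), with the $d=3$ base case handled by the observation that each facet $F_i$ lists at most $2(d-1)=4$ distinct vertices), and then extract the count of $4$-gons from a flag $f$-vector identity, $q=f_{02}-3f_2$, which you evaluate to $n-d$ on the pyramid $Q*\Delta_{d-3}$ using Theorem~\ref{thm:multiplex}(c). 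What this buys you is independence from the explicit combinatorial description of the $4$-gon faces: you never need to know which $2$-faces are quadrilaterals, only that none are larger, after which the linear flag $f$-vector bookkeeping does the counting. Your caveat about $d=2$ is apt: for $M^{2,n}$ with $n\geq 4$ the statement fails as literally written, so the observation implicitly assumes $d\geq 3$, and your induction correctly starts there.
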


Thus, combining with Lemma \ref{lem:g(M)-bistellar}, we obtain the following.

\begin{proposition}\label{prop:toric-g-by-Squares}
Let $M$ and $I$ be as in Theorem \ref{thm:main}. Denote by $\Box^*$ the set of faces  $t$ in $M$ such that  $[t,\hat{1}]^*$ is the face poset of a $4$-gon.
Then,
\begin{equation}\label{eqBox}
\begin{split}
g(\Bier(M,I),x)=&
\sum_{t\in \hat{M}-I}x^{r(\hat{M})-r(t)} +\
g_1(M) x\\
&+\sum_{t\in M-I} g_1([\hat{0},t))x^{r(\hat{M})-r(t)+1}\\
&\quad+\sum_{t\in M-I}\sum_{s\in \Box^*}\delta_{t\subseteq s} x^{r(\hat{M})-r(t)-1}
\end{split}
\end{equation}
\hfill$\square$
\end{proposition}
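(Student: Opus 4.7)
The plan is to induct on $|M - I|$, starting from the base case $I = M$ and using Lemma \ref{lem:g(M)-bistellar} to pass from an ideal $I'$ to $I' - \{t\}$ at each step. When $I = M$, $\Bier(M, M) \cong M$ gives $g(\Bier(M, M), x) = 1 + g_1(M) x$ by Theorem \ref{thm:multiplex}(c); the right-hand side of (\ref{eqBox}) collapses (since $\hat M - I = \{\hat 1\}$ and $M - I = \emptyset$) to $x^{r(\hat M) - r(\hat 1)} + g_1(M) x = 1 + g_1(M) x$, matching.

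For the inductive step, let $I' = I \cup \{t\}$ with $t$ maximal in $I'$, so $r(t) > \lceil d/2 \rceil + 1$. Passing from $I'$ to $I$ increases the right-hand side of (\ref{eqBox}) by exactly
\[
x^{r(\hat M) - r(t)} + g_1([\hat 0, t))\, x^{r(\hat M) - r(t) + 1} + \#\{s \in \Box^* : t \leq s\}\, x^{r(\hat M) - r(t) - 1},
\]
so the task is to show this equals $\Delta g = g(\Bier(M, I), x) - g(\Bier(M, I'), x)$ from Lemma \ref{lem:g(M)-bistellar}. The key simplification: since $g_2(M) = 0$ by Theorem \ref{thm:multiplex}(c), the Braden--MacPherson bound cited before the proposition forces $g_1([\hat 0, t)) \cdot g_1((t, \hat 1]^*) = 0$, killing the mixed term in Lemma \ref{lem:g(M)-bistellar}. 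Using $r(\hat M) = d + 1$ to translate the lemma's exponents, its $\Delta g$ matches the first two displayed summands directly; and when $t$ is a facet the $x^0$ coefficient vanishes, consistent with $\#\{s \in \Box^* : t \leq s\} = 0$ (no $s$ of rank $d - 2$ can sit above a rank-$d$ element).

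Matching therefore reduces to the single identity $g_1((t, \hat 1]^*) = \#\{s \in \Box^* : t \leq s\}$. By Theorem \ref{thm:multiplex}(a,b), $(t, \hat 1]^*$ is the boundary poset of the multiplex $(M/t)^*$, whose $4$-gon $2$-faces are counted by $g_1$ via Observation \ref{obs:g1squares}. A $2$-face of $(M/t)^*$ corresponds under the opposite to an $s \in (t, \hat 1]$ of rank $r(\hat M) - 3 = d - 2$, and its internal face lattice is $[s, \hat 1]^*$, so this $2$-face is a $4$-gon precisely when $s \in \Box^*$. I expect the main obstacle to lie in this bookkeeping of ranks and duality, and particularly in the edge case $r(t) = d - 2$ where $(M/t)^*$ is itself $2$-dimensional and Observation \ref{obs:g1squares} does not directly apply; here one invokes the constraint $g_1([\hat 0, t))\, g_1((t, \hat 1]^*) = 0$ together with the structural fact (checked from Bisztriczky's construction) that rank-$(d-2)$ quotients of a multiplex are triangles or quadrilaterals, so that $g_1((t, \hat 1]^*) \in \{0, 1\}$ as required by the only two possible values of the count on the right.
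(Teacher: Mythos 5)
Your proof is correct and follows essentially the same route the paper intends: induct on $|M-I|$ from $I=M$, use Lemma~\ref{lem:g(M)-bistellar} to compute each increment, kill the cross term via the Braden--MacPherson inequality $g_1([\hat 0,t))\,g_1((t,\hat 1]^*)\le g_2(M)=0$, and reinterpret $g_1((t,\hat 1]^*)$ combinatorially as a count of $4$-gons using Observation~\ref{obs:g1squares} plus self-duality. The rank bookkeeping (exponents $d+1-r(t)$, $d+2-r(t)$, $d-r(t)$ against $r(\hat M)=d+1$) matches, and your treatment of the facet case $r(t)=d$ is right.

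Your instinct that the edge case $r(t)=d-2$ needs separate care is well taken: Observation~\ref{obs:g1squares} as literally stated concerns $2$-dimensional faces of a multiplex and becomes vacuous or wrong when the multiplex itself is $2$-dimensional, yet for $d\ge 8$ one can indeed remove $t$ of rank $d-2$. The structural fact you invoke is correct and has a clean one-line justification you could have added: by self-duality of multiplexes (Theorem~\ref{thm:multiplex}(a,b)), a $2$-dimensional quotient $M/t$ is isomorphic to the dual of a $2$-face of the dual multiplex $M^*$, and Bisztriczky's characterization of $2$-faces says $2$-faces of a multiplex are triangles or quadrilaterals; hence $M/t$ is a triangle or a $4$-gon, giving $g_1((t,\hat 1]^*)\in\{0,1\}$, with value $1$ exactly when $t\in\Box^*$. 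One small inaccuracy in your writeup: the Braden--MacPherson constraint $g_1([\hat 0,t))\,g_1((t,\hat 1]^*)=0$ is not actually needed for this edge case (the structural fact alone suffices, and the B--M bound does not by itself rule out $g_1((t,\hat 1]^*)>1$); it is only needed to kill the mixed coefficient in the middle term. This is a minor misattribution, not a gap.
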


First, let us consider the special case where $M$ is the boundary of a simplex. Then all summands but the first one on the right hand side of (\ref{eqBox}) vanish ($\Box^*=\emptyset$ in this case). Now $(\hat{M}-I)^*$ is an ideal in the Boolean lattice dual to $\hat{M}$, thus it corresponds to the face poset of a simplicial complex, whose $f$-polynomial is equal, by Proposition \ref{prop:toric-g-by-Squares}, to $g(\Bier(M,I),x)$.
For a different proof see \cite{BierPosets-BPSZ}.
In particular, Conjecture \ref{conj:StanleyToric-g} holds in this case.

Whether for all multiplexes $M$ and ideals $I$ the $g(\Bier(M,I))$ are $M$-sequences is still open.

Next, we show that for any $M,I,P$ as in Theorem \ref{thm:main}, $$(1,g_1(P),g_2(P))$$
 is a Kruskal-Katona vector (in particular, an $M$-sequence).
By self-duality of polytopes $(\hat{M}-I)^*$ is the face poset of a polyhedral complex $\Gamma$, whose $f$-vector corresponds to the first summand on the right hand side of (\ref{eqBox}). Denote by $t^*$ the face in $\Gamma$ corresponding to $t\in \hat{M}-I$.
Take the disjoint union of $\Gamma$ with a totally ordered set $V$ of $g_1(M)$ vertices, and for any face $t^*\in \Gamma$ add as faces the cones over $t^*$ having apex each of the first $g_1([\hat{0},t))$ vertices in $V$.
As $t \subseteq s$ implies $g_1([\hat{0},t))\leq g_1([\hat{0},s))$, this results in a polyhedral complex, denoted by $\Delta$, whose $f$-polynomial is given by the sum in the first two lines of the right hand side of (\ref{eqBox}).
(We remark that any polyhedral complex
is a meet semi-lattice with the \emph{diamond property},
namely, any interval of rank $2$ has $4$ elements, and thus, by a result of Wegner \cite{Wegner}, there is a simplicial complex with the same $f$-vector as $\Delta$.)
To the $1$-skeleton of $\Delta$ one can add a diagonal at each $4$-gon face $t^*$ where $t\in (M-I)\cap \Box^*$, and by (\ref{eqBox}) the resulted graph $G$ has $f$-vector $(1,g_1(P),g_2(P))$.

\section{Concluding remarks}\label{sec:last}
We keep the notation of Theorem \ref{thm:main}.

Note that $M^{d,n}$ has non-simplex $2$-faces iff $n>d$, in this case they are of the form $F=\rm{conv}(x_i,x_{i+1},x_{i+d},x_{i+d+1})$ \cite{Bisztriczky-Multiplex1}, and note that $I$ contains all rank $3$ elements in $M$. Thus, $\Bier(M,I)$ is non-simplicial - for example $[F,\hat{1}]$ is a non-simplex $2$-face.

We now argue that $\Bier(M,I)$ is not polytopal for many choices of $I$.
$M$ has $\binom{d+1}{i+1}+(n-d) \binom{d-1}{i}$ $i$-faces (this was computed already in \cite{Bisztriczky-Multiplex1}, and follows also from Theorem \ref{thm:multiplex}(c)).
Let $F$ be a facet of $M$. Then $F$ is $(d-1)$-dimensional with at most $2(d-1)$ vertices. For $d\geq 6$, a simple computation shows that the number of  ideals $I_F$ in $[\hat{0},F)$ which contain all the elements in $[\hat{0},F)$ of rank at most $\lceil\frac{d}{2}\rceil+1$ is at least $2^{\binom{d}{\lceil  \frac{d}{2} \rceil +2}}$,
thus the number of non-isomorphic such $\Bier(F,I_F)$ is at least
$\frac{2^{\binom{d}{\lceil \frac{d}{2}\rceil +2}}}{(4(d-1))!}=\Omega(2^{(1-\epsilon_d)2^d/\sqrt{d}})$ where $\epsilon_d\rightarrow 0$ as $d\rightarrow \infty$ (see e.g. \cite{BierPosets-BPSZ}).
It is known that the number of non-isomorphic polytopes with at most $4(d-1)$ vertices is less than $2^{64d^3+O(d^2)}$ \cite{GoodmanPollack:fewPolytopes-86,Alon:fewPolytopes}, hence most of the $\Bier(F,I_F)$ as above are nonpolytopal for $d>>1$. Fix an ideal $I_F$ such that $\Bier(F,I_F)$ is nonpolytopal.
For an ideal $I$ in $P$ such that $I\cap [\hat{0},F)=I_F$ note that $\Bier(F,I_F)$ is an interval in
$\Bier(M,I)$, hence $\Bier(M,I)$ is nonpolytopal.
\\

Next, we discuss shellability of $\Bier(M,I)$, for any ideal $I$. The order complex of $\Bier(M,I)$ is obtained from the barycentric subdivision of the boundary complex of $M$ by a sequence of edge subdivisions \cite[Theorem 2.2]{BierPosets-BPSZ}, thus is polytopal and in particular shellable. In \cite[Theorem 4.1]{BierPosets-BPSZ} it is shown that if $M$ is a simplex then $\Bier(M,I)$ itself is shellable (in this case $\Bier(M,I)$ is the face poset of a simplicial sphere). A generalization of shellability to regular finite CW complexes was defined in \cite{Bjorner84:PosetsRegularCWBruhatOrder} and was shown to be equivalent to CL-shellability of its  augmented dual \cite[Proposition 4.2]{Bjorner84:PosetsRegularCWBruhatOrder}. We now exhibit an EL-labeling of $Q=\Bier(M,I)^*\cup\hat{1}$ by the labels poset $[0,n]:=0<1<\cdots<n$, in particular proving its CL-shellability, and thus the shellability of $\Bier(M,I)$.

Denote by $E(Q)$ the cover relations $a\prec b$ in $Q$.
Recall that a map $c: E(Q)\rightarrow [0,n]$ is an \emph{EL-labeling}
if for any interval $[a,b]$ in $Q$ the following holds \cite{BW:LexShellPosets}: 

(i) there is a unique \emph{rising} unrefinable chain $a=a_0\prec a_1\prec\cdots \prec a_t=b$, i.e. $c(a_0\prec a_1)<\cdots < c(a_{t-1}\prec a_t)$ in $[0,n]$, and 

(ii) among the label-sets of unrefinable chains from $a$ to $b$ the set $\{c(a_0\prec a_1),\cdots,c(a_{t-1}\prec a_t)\}$ is the lexicographically least.

\begin{theorem}
$Q=\Bier(M,I)^*\cup\{\hat{1}\}$ has an EL-labeling. Thus, $\Bier(M,I)$ is shellable.
\end{theorem}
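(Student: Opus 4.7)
The plan is to define $c : E(Q) \to [0,n]$ explicitly and then verify the two EL-conditions on each interval of $Q$, reducing them to analogous properties of intervals of $\hat M$.

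I would first describe the covers of $Q$. Apart from the unique cover $[\hat 0,\hat 1]^{*} \prec \hat 1$ at the top of $Q$, every cover is either a \emph{left} cover $[x,y]^{*} \prec [x',y]^{*}$ coming from a cover $x' \lessdot x$ in $\hat M$ with $x' \in I$, or a \emph{right} cover $[x,y]^{*} \prec [x,y']^{*}$ coming from a cover $y \lessdot y'$ in $\hat M$ with $y' \notin I$. Left and right covers commute, so every interval $[[a,b]^{*},[a',b']^{*}]$ of $\Bier(M,I)^{*}$ is isomorphic to the product poset $[a',a]_{\hat M}^{\mathrm{op}} \times [b,b']_{\hat M}$, and the only intervals of $Q$ not of this form are those including the new top $\hat 1$, which extend a product interval by one additional cover on top.

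Next I would build the labeling from the self-dual multiplex structure. By Theorem \ref{thm:multiplex}(b) each face of $M^{d,n}$ is itself a multiplex on a distinguished subset of $\{x_0,\ldots,x_n\}$, and in any such sub-multiplex the facets are naturally indexed by its vertices. This yields a canonical labeling $\mu : E(\hat M) \to [0,n]$ in which $\mu(F' \lessdot F)$ is the index of the vertex of $F$ ``opposite'' to the facet $F'$. Self-duality of multiplexes (Theorem \ref{thm:multiplex}(a)) produces an analogous labeling $\mu^{*}$ on covers in the opposite-face intervals $[F,\hat 1] \cong (\hat M/F)^{\mathrm{op}}$. I would set $c$ to equal $\mu$ on left covers, $\mu^{*}$ on right covers, and a single distinguished value on the top cover $[\hat 0,\hat 1]^{*} \prec \hat 1$, chosen larger than any label that can occur strictly below $\hat 1$.

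Finally, I would verify conditions (i) and (ii) on every interval of $Q$. For a product-type interval $[[a,b]^{*},[a',b']^{*}]$, an unrefinable chain is an interleaving of a maximal chain from $a'$ to $a$ in $\hat M$ with one from $b$ to $b'$ in $\hat M$; I would show that exactly one such interleaving produces a rising label sequence and that this same interleaving is lex-smallest, by reducing to the standard EL-property of a product of EL-shellable posets together with the EL-property of $\mu$ and $\mu^{*}$ on face lattices of multiplexes. Intervals ending at $\hat 1$ are handled by appending the distinguished top label, which preserves both uniqueness of the rising chain and lex-minimality. The main obstacle is coordinating left and right labels within the single set $[0,n]$: the standard product construction would shift one factor's labels above the other's, but here every label must come from $[0,n]$. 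The resolution will exploit the concrete combinatorics of the vertex labels $x_0,\ldots,x_n$: within $[a',a]$ the labels of $\mu$ are drawn only from the vertex set of $a$, while within $[b,b']$ the labels of $\mu^{*}$ are drawn from the dual vertex-structure of the quotient $b'/b$, and the complementary positions of these two sides in $\hat M$ force the label-sets on the two factors to separate, so that a single linear order on $[0,n]$ recovers a product EL-labeling. Checking this separation in all cases is the heart of the argument.
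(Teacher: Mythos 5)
Your overall plan is the same as the paper's: decompose each interval of $Q$ as a product of two intervals of $\hat M$ (a descending interval for the left endpoint, an ascending one for the right), label covers on each side using the multiplex/self-dual-multiplex structure, and then argue that the left and right label sets are disjoint so that the product shuffle is forced. You have correctly identified the product structure of intervals and, importantly, the real obstacle: that both sides' labels live in the single chain $[0,n]$ rather than in two disjoint label sets. Your resolution of that obstacle is also essentially the right one, although stated in a more convoluted way than needed: the left labels lie in $x\setminus x'$ and the right labels in $y'\setminus y$, and since $x'\le x\le y\le y'$ these two vertex sets are automatically disjoint — no appeal to "dual vertex structure" is required.

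However, there are two genuine gaps. First, your labeling $\mu(F'\lessdot F)=$ "index of the vertex of $F$ opposite $F'$" is not well-defined for multiplexes that are not simplices: the facets $F_0(N)$ and $F_l(N)$ of a multiplex $N$ on vertices $v_0<\cdots<v_l$ are simplices omitting several vertices of $N$, so there is no single "opposite vertex." The paper's labeling $c_M$ makes a very specific, non-obvious choice here — it sets $c_M(F_0(N)\prec N)=v_{d'}$ (the smallest vertex not in $F_0(N)$) and $c_M(F_l(N)\prec N)=v_{l-d'}$ (the largest not in $F_l(N)$), while $c_M(F_j(N)\prec N)=v_j$ for $0<j<l$. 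The naive reading of your $\mu$, which would label $F_0\prec M$ by $0$ and $F_n\prec M$ by $n$, produces repeated labels along chains (e.g.\ $0$ appears both as a low cover label and on $F_0\prec M$), and it is not at all clear it is an EL-labeling. Second, and more seriously, you assert "the EL-property of $\mu$ and $\mu^*$ on face lattices of multiplexes" as an input, but this is exactly the core technical lemma that must be proved and it does not follow from any standard result. The paper devotes a separate lemma to it (with an induction on rank for lower intervals, and a self-duality argument $[x,y]\cong M(J)\cong M(J)^*$ to transfer the result to arbitrary intervals), and the verification depends crucially on the boundary corrections in the definition of $c_M$. Without that lemma, the product argument has nothing to rest on. As a smaller point, the "distinguished top value" for the cover into $\hat 1$ is an ad hoc addition that the paper avoids by choosing $\hat 1 = [\hat 1_M,\hat 1_M]$ so that the same rule covers the top.
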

\begin{proof}
First we define an EL-labeling $c_M$ of $M$ itself. For a facet
$F_i(M)=F_i:=\rm{conv}(x_{i-d+1},x_{i-d+2},...,x_{i-1},x_{i+1},x_{i+2},...,x_{i+d-1})$ of $M$ let $c_M(F_i\prec M)=i$ for $i\neq 0,n$, and $c_M(F_0\prec M)=d$, $c_M(F_n\prec M)=n-d$. Each $d'$-face $N$ in $M$ with vertex set $J$ is a multiplex with the induced order on $J$ from $[0,n]$, and we apply the rule above to define $c_M(F,N)$ for the facets $F$ of $N$. Namely, if $J=\{v_0<v_1<\cdots <v_l\}$, any facet
$F$ is of the form $F_j(N)$ for a unique $0\leq j\leq l$, and define $c_M(F_0(N)\prec N)=v_{d'}$, $c_M(F_l(N)\prec N)=v_{l-d'}$ and $c_M(F_j(N)\prec N)=v_{j}$ for $0<j<l$.
(Note that we do not distinguish between $i$ and $x_i$ as labels; essentialy the set of labels is $\{ 0,1,\cdots,n \}$.)

\begin{lemma}\label{claim:EL(M)}
$c_M$ is an EL-labeling of $M$.
\end{lemma}
\begin{proof}
As $c_M(x\prec y)\in y\setminus x$ all the labels in a chain are distinct. 
First we show that (i) and (ii) in the definition of EL-labeling hold for intervals of the form $[\hat{0},N]$. Let $J$ be the vertex set of $N$, as above. Clearly, the lexicographically least $d'$-subset of $J$ is $\{v_0,v_1,\cdots,v_{d'}\}$, and is attained as a label-set by the rising unrefinable chain $\hat{0}\prec\cdots\prec F_0(F_0(N))\prec F_0(N)\prec N$. We now show there is no other rising maximal chain in $[\hat{0},N]$. Else, by induction on the rank of $N$, w.l.o.g. we can assume that in such a rising chain $C$ the facet of $N$ taken is some $F=F_j(N)$ with $j>0$. Then, there are less then $d'-1$ vertices in $F$ smaller than $c_M(F\prec N)$, so $C$ is not rising, a contradiction.

For a general interval $[x,y]$ in $M$, $x=\cap_{j\in J}F'_j$,  the intersection runs over all the facets of $y$ containing $x$, and we use notation so that $J$ is the set of vertices $y\setminus x$. Let $M(J)$ be the $(r(y)-r(x)-1)$-dimensional multiplex on the ordered set $J$, where $r(x)$ is the rank of $x$ in the poset $M$.
By duality, $[x,y]\cong M(J)\cong M(J)$ with isomorphisms $F'_j\mapsto j\mapsto j^*$, where $j^*$ is the facet $F_j(M(J))$, so
$c_M(F'_j\prec y)=c_{M(J)}(j^*\prec \hat{1})$. Thus, by the case of lower intervals considered before, $c_M$ satisfies (i) and (ii) for \emph{any} interval $[x,y]$ in $M$.
\end{proof}

Back to $Q$, denote $\hat{1}:=[\hat{1}_M,\hat{1}_M]\in Q$. 
For a cover relation $[x,y]\prec[x',y']$ in $Q$ we define the label $c([x,y]\prec[x',y'])$ as follows: if $x=x'$ then $y\prec y'$ and define $c([x,y]\prec[x',y'])=c_M(y\prec y')$; else $y=y'$ and $x'\prec x$ and define $c([x,y]\prec[x',y'])=c_M(x'\prec x)$. 

For any interval $[[x,y],[x',y']]$ in $Q$, we have shown in Lemma \ref{claim:EL(M)} that there are unique rising maximal chains $C_x,C_y$ in $[x',x],[y,y']$ in $M$, with label-sequences $L_x,L_y$ resp.
There is a unique way to shuffle $L_x$ and $L_y$ in order to get an increasing label-sequence $L$, and this shuffle induces a rising maximal chain $C$ in $[[x,y],[x',y']]$. As $L_x,L_y$ are lexicographically least in the corresponding intervals, so is $L$ in $[[x,y],[x',y']]$.
Moreover, any rising maximal chain $C'$ in $[[x,y],[x',y']]$ with label set $L'$ must decompose as $L'=L_x\cup L_y$ and thus $C'=C$.
\end{proof}


\section*{Acknowledgments}
We would like to thank the anonymous referees for their helpful comments.

\bibliography{gbiblio}
\bibliographystyle{plain}
\end{document}